\newtheorem{lemma}{Lemma}
\newtheorem{thm}{Theorem}
\newtheorem{prop}{Proposition}
\begin{document}

\title[An improved Strichartz estimate for systems]{An improved Strichartz estimate for systems with divergence free data}
\author{Sagun CHANILLO}
  \address{Department of Mathematics \\ Rutgers University \\ NJ 08854}
  \email{chanillo@math.rutgers.edu}

\author{Po-Lam YUNG}
  \address{Department of Mathematics \\ Rutgers University \\ NJ 08854}
  \email{pyung@math.rutgers.edu}
  
\thanks{The first author is supported by NSF grant DMS-0855541. The second author would like to thank Jonathan Luk for a number of inspiring discussions.}

\begin{abstract}
Using the div-curl inequalities of Bourgain-Brezis \cite{MR2057026} and van Schaftingen \cite{MR2078071}, we prove an improved Strichartz estimate for systems of inhomogeneous wave and Schrodinger equations, for which the inhomogeneity is a divergence-free vector field at each given time. The novelty of the result is that one can allow $L^1_x$ norms of the inhomogeneity in the right hand side of the estimate.
\end{abstract}

\subjclass[2000]{35L05, 35L10}

\maketitle

In this paper we are interested in improved Strichartz estimates for systems of inhomogeneous wave and Schrodinger equations, when the inhomogeneity is a divergence free vector field at any given time. The starting point is the following simple observation:

\begin{prop} \label{prop1}
Suppose $u \colon \mathbb{R}^{1+2} \to \mathbb{R}^2$ is a (weak) solution of the following system of wave equations
$$
\begin{cases}
\square u = f \\
\left. u \right|_{t = 0} = u_0 \\
\left. \partial_t u \right|_{t = 0} = u_1
\end{cases}
$$
where $f = (f_1, f_2) \colon \mathbb{R}^{1+2} \to \mathbb{R}^2$ is a divergence free vector field at each given time $t$, i.e. $$\partial_{x_1} f_1 + \partial_{x_2} f_2 = 0$$ for each $t$. Then
$$
\|u\|_{C^0_t L^2_x} + \|\partial_t u\|_{C^0_t \dot{H}^{-1}_x} \leq C \left( \|u_0\|_{L^2} + \|u_1\|_{\dot{H}^{-1}} + \|f\|_{L^1_t L^1_x} \right).
$$
\end{prop}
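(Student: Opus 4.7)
The plan is to reduce everything to the standard energy inequality for the two-dimensional inhomogeneous wave equation, and then to exploit the divergence-free hypothesis on $f(t,\cdot)$ to upgrade an $L^1_t \dot{H}^{-1}_x$ norm of the forcing into the much weaker $L^1_t L^1_x$ norm via the Bourgain-Brezis/van Schaftingen inequality. Note that $\square u = f$ decouples into two scalar wave equations $\square u_i = f_i$, and the divergence-free hypothesis links the two components only through the condition $\partial_{x_1} f_1 + \partial_{x_2} f_2 = 0$.

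First, I would apply the classical $L^2$-based energy estimate for $\square u = f$ componentwise. Using the Duhamel representation
$$u(t) = \cos(t|\nabla|) u_0 + \frac{\sin(t|\nabla|)}{|\nabla|} u_1 + \int_0^t \frac{\sin((t-s)|\nabla|)}{|\nabla|}\, f(s)\, ds,$$
and the fact that $\cos(t|\nabla|)$ and $\sin(t|\nabla|)$ are isometries of every homogeneous Sobolev space, one obtains (scalar-by-scalar, then summing)
$$\|u\|_{C^0_t L^2_x} + \|\partial_t u\|_{C^0_t \dot{H}^{-1}_x} \leq C\bigl(\|u_0\|_{L^2} + \|u_1\|_{\dot{H}^{-1}} + \|f\|_{L^1_t \dot{H}^{-1}_x}\bigr).$$
This is totally insensitive to the divergence-free structure.

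To finish, it therefore suffices to prove the slicewise inequality
$$\|f(t,\cdot)\|_{\dot{H}^{-1}_x(\mathbb{R}^2)} \leq C \|f(t,\cdot)\|_{L^1_x(\mathbb{R}^2)}$$
for each fixed $t$, and then to integrate in $t$. Since $f(t,\cdot)\colon\mathbb{R}^2 \to \mathbb{R}^2$ is divergence-free at each $t$, this is exactly the endpoint case of the Bourgain-Brezis inequality \cite{MR2057026}; equivalently, in van Schaftingen's dual formulation \cite{MR2078071}, one has
$$\Bigl| \int_{\mathbb{R}^2} f(t,\cdot) \cdot \phi\, dx \Bigr| \leq C \|f(t,\cdot)\|_{L^1_x} \|\nabla \phi\|_{L^2_x}$$
for all $\phi \in C_c^\infty(\mathbb{R}^2;\mathbb{R}^2)$, which, recognizing $\|\nabla \phi\|_{L^2_x}$ as the $\dot{H}^1_x$ norm, is precisely the assertion that $f(t,\cdot) \in \dot{H}^{-1}_x$ with the desired bound. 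Combining the pointwise bound with the energy estimate completes the proof.

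The main (and essentially the only) obstacle is the slicewise inequality of the previous paragraph. Without the divergence-free hypothesis it is simply false in two dimensions: an approximate Dirac mass has bounded $L^1$ norm but unbounded $\dot{H}^{-1}$ norm, because $\int_{|\xi|\leq 1} |\xi|^{-2}\,d\xi = \infty$ in $\mathbb{R}^2$. The divergence-free condition is exactly what rescues the estimate, and the genuinely hard analysis is the construction behind \cite{MR2057026, MR2078071}; once that inequality is imported as a black box, the remainder of the argument is just the classical energy estimate together with Minkowski's inequality in $t$.
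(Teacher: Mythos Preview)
Your proof is correct and follows essentially the same route as the paper's: the classical energy estimate reduces matters to the slicewise bound $\|f(t,\cdot)\|_{\dot{H}^{-1}_x(\mathbb{R}^2)} \leq C\|f(t,\cdot)\|_{L^1_x(\mathbb{R}^2)}$, which is supplied by Bourgain--Brezis. The only cosmetic difference is that the paper phrases this last step via the explicit stream function $g$ (so that $f_1=\partial_{x_2}g$, $f_2=-\partial_{x_1}g$, and $\|g\|_{L^2}\leq C\|f\|_{L^1}$ by the critical Sobolev embedding $\dot{W}^{1,1}\hookrightarrow L^2$ in $\mathbb{R}^2$), whereas you cite the equivalent dual formulation of van~Schaftingen; the content is identical.
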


Here $\square = -\partial_t^2 + \Delta$ is the d'Alembertian acting componentwise on $u$, and $\dot{H}^s$ is the homogeneous Sobolev space $\dot{W}^{s,2}$.

A remarkable feature in our estimate is that on the right hand side we only need the $L^1_x$ norm of $f$, which is usually not possible in the classical energy (or Strichartz) inequalities. Our estimate is only possible because we have the additional structural assumption that $f$ is a divergence free vector field at each time $t$. In fact if one tries to prove the Proposition using Sobolev embedding naively without using this divergence free assumption, say when $u_0 = u_1 = 0$, then one would estimate, at any time $t$,
\begin{align*}
\|u\|_{L^2_x}
&= \left\| \int_0^t \frac{\sin((t-s)\sqrt{-\Delta})}{\sqrt{-\Delta}} f(s,x) ds \right\|_{L^2_x} \\
&\leq \int_0^t \left\| \frac{1}{\sqrt{-\Delta}} f(s,x) \right\|_{L^2_x} ds \\
&\leq \int_0^t \left\| R_1 f(s,x) \right\|_{L^1_x} + \left\| R_2 f(s,x) \right\|_{L^1_x} ds
\end{align*}
where $R_j$ are the Riesz transforms on $\mathbb{R}^2$, which are unfortunately not bounded on $L^1$.

Before we state our more general results, we first give a short proof of Proposition \ref{prop1}. The proof relies on the following simple observation that we first learned from Bourgain-Brezis \cite{MR2057026}:

\begin{lemma}[Bourgain-Brezis] \label{lemma1}
For each divergence free vector field $F = (F_1, F_2)$ on $\mathbb{R}^2$ with $F \in L^1$, there exists $G \in L^2$ such that $F_1 = \partial_{x_2} G$ and $F_2 = -\partial_{x_1} G$ with $\|G\|_{L^2} \leq C \|F\|_{L^1}$.
\end{lemma}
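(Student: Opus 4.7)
My plan is to realize $F$ as the perpendicular gradient of a stream function $G$ and then invoke the endpoint Sobolev embedding in the plane. Assume first that $F\in C_c^\infty(\mathbb{R}^2;\mathbb{R}^2)$ is smooth and compactly supported. Define
\[
G(x_1, x_2) = \int_{-\infty}^{x_2} F_1(x_1, s)\, ds.
\]
The fundamental theorem of calculus immediately gives $\partial_{x_2} G = F_1$, while differentiation under the integral sign, combined with the divergence-free identity $\partial_{x_1} F_1 = -\partial_{x_2} F_2$ and the decay of $F_2$ at $s=-\infty$, yields $\partial_{x_1} G = -\int_{-\infty}^{x_2}\partial_s F_2(x_1,s)\,ds = -F_2(x_1,x_2)$. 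Thus $|\nabla G| = |F|$ pointwise, so in particular $\|\nabla G\|_{L^1}=\|F\|_{L^1}$.

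The key analytic input is then the endpoint Gagliardo--Nirenberg--Sobolev embedding $\dot W^{1,1}(\mathbb{R}^2)\hookrightarrow L^2(\mathbb{R}^2)$, namely
\[
\|G\|_{L^2(\mathbb{R}^2)} \leq C\,\|\nabla G\|_{L^1(\mathbb{R}^2)},
\]
which, combined with the previous identity, gives the conclusion for smooth compactly supported $F$. For a general divergence-free $F\in L^1$, I would mollify to get smooth divergence-free approximants $F_\varepsilon = \chi_\varepsilon * F \to F$ in $L^1$, then truncate at the level of the stream function itself (cutting off $G_\varepsilon$ and taking its perpendicular gradient) to obtain $C_c^\infty$ approximants that remain divergence-free. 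The corresponding stream functions $G_n$ are uniformly bounded in $L^2$ by $C\|F\|_{L^1}$, so a weak-$L^2$ limit point $G$ inherits the identities $F_1=\partial_{x_2}G$ and $F_2=-\partial_{x_1}G$ distributionally, with the desired norm bound.

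The main obstacle I anticipate is precisely this approximation step: producing smooth compactly supported divergence-free approximants of a generic $L^1$ divergence-free field without breaking the divergence-free structure requires some care, since a direct spatial cutoff of $F$ would introduce a nonzero divergence. Once this technicality is handled, the lemma reduces to the explicit construction above together with the endpoint Sobolev inequality, both of which are short.
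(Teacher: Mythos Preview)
Your proposal is correct and follows essentially the same route as the paper: realize $F$ as the perpendicular gradient of a stream function $G$ so that $\nabla G=(-F_2,F_1)\in L^1$, and then invoke the endpoint Sobolev embedding $\dot W^{1,1}(\mathbb{R}^2)\hookrightarrow L^2(\mathbb{R}^2)$. The paper's proof is simply a one-line version of this, omitting the explicit integral formula and the approximation discussion you sketch.
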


\begin{proof}
The assumption that $\text{div} F = 0$ allows one to find $G$ such that $F_1 = \partial_{x_2} G$ and $F_2 = -\partial_{x_1} G$, and $G \in L^2$ by Sobolev embedding because $\nabla G = (-F_2, F_1) \in L^1$.
\end{proof}

In fact in \cite{MR2057026} and the subsequent work \cite{MR2293957}, \cite{MR2078071}, Bourgain-Brezis and van Schaftingen obtained some far-reaching generalizations of this simple lemma, and the latter is what we shall exploit in our more general result in this paper.

\begin{proof}[Proof of Proposition \ref{prop1}]
Let $f$ be as in the Proposition. Applying the lemma to $f(t,\cdot)$ at each time $t$, we obtain a function $g(t,\cdot)$ such that $f_1 = \partial_{x_2} g$, $f_2 = -\partial_{x_1} g$, and $\|g\|_{L^2_x} \leq C \|f\|_{L^1_x}$ at each time $t$.
Now the classical energy estimate says that
$$
\|u\|_{C^0_t L^2_x} + \|\partial_t u\|_{C^0_t \dot{H}^{-1}_x} \leq C \left(\|u_0\|_{L^2} + \|u_1\|_{\dot{H}^{-1}} + \|(-\Delta)^{-\frac{1}{2}}f\|_{L^1_t L^2_x} \right).
$$
Since for each fixed $t$,
$$
\|(-\Delta)^{-\frac{1}{2}}f\|_{L^2_x} = \|(-\Delta)^{-\frac{1}{2}}\nabla g\|_{L^2_x} \leq C \|g\|_{L^2_x} \leq \|\nabla f\|_{L^1_x}
$$
by Sobolev embedding, our result follows.
\end{proof}

The key observation in proving Proposition~\ref{prop1} is that the coefficients of $f$ are in $\dot{H}^{-1}(\mathbb{R}^2)$ for all $t$ under the given conditions. We remark that there are other situations under which the inhomogeneity of the wave equation lies in $\dot{H}^{-1}(\mathbb{R}^2)$; one instance is given in the appendix.

In what follows, we derive improved Strichartz inequalities similar to Proposition~\ref{prop1}, using generalizations of Lemma~\ref{lemma1} by van Schaftingen.

\section{Strichartz estimates for the wave equation}

In the sequel we shall consider vector fields $f \colon \mathbb{R}^{1+n} \to \mathbb{R}^n$. Our main result for the wave equation is the following:

\begin{thm} \label{thm:homoStr}
Suppose $n \geq 2$, and let $u \colon \mathbb{R}^{1+n} \to \mathbb{R}^n$ be a (weak) solution of the system
$$
\begin{cases}
\square u = f \\
\left. u \right|_{t = 0} = u_0 \\
\left. \partial_t u \right|_{t = 0} = u_1
\end{cases}
$$
where $f(t,x) \colon \mathbb{R}^{1+n} \to \mathbb{R}^n$ is a divergence free vector field for all $t$. Suppose $s, k \in \mathbb{R}$, $2 \leq q, \tilde{q}  \leq \infty$, $2 \leq r < \infty$, and we assume further that $\tilde{q} > \frac{4}{n-1}$ if $n = 2$ or 3. Suppose $(q,r)$ satisfies the wave admissibility condition
$$\frac{1}{q} + \frac{n-1}{2r} \leq \frac{n-1}{4},$$
and the following scale invariance condition is verified:
$$\frac{1}{q} + \frac{n}{r} = \frac{n}{2} - s = \frac{1}{\tilde{q}'} + n - 2 - k.$$
Then
\begin{align*}
& \|u\|_{L^q_t L^r_x} + \|u\|_{C^0_t \dot{H}^s_x} + \|\partial_t u\|_{C^0_t \dot{H}^{s-1}_x} \\
\leq  & C \left( \|u_0\|_{\dot{H}^s} + \|u_1\|_{\dot{H}^{s-1}} + \|(-\Delta)^{\frac{k}{2}} f\|_{L^{\tilde{q}'}_t L^1_x} \right).
\end{align*}
\end{thm}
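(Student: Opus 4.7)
The strategy mirrors the proof of Proposition~\ref{prop1}, with Lemma~\ref{lemma1} replaced by its higher-dimensional generalization due to van Schaftingen. The idea is to trade the $L^1_x$ spatial norm of $(-\Delta)^{k/2}f$ on the right-hand side for a standard Strichartz norm $\|f\|_{L^{\tilde{q}'}_t L^{\tilde{r}'}_x}$, and then invoke the classical inhomogeneous Strichartz estimate for the wave equation applied componentwise.

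The key tool is van Schaftingen's inequality: for any divergence free vector field $F\in L^1(\mathbb{R}^n;\mathbb{R}^n)$,
$$\|(-\Delta)^{-1/2}F\|_{L^{n/(n-1)}_x}\leq C\|F\|_{L^1_x},$$
which, composed with the Hardy--Littlewood--Sobolev inequality, gives, for each $1\leq k<n$,
$$\|(-\Delta)^{-k/2}F\|_{L^{n/(n-k)}_x}\leq C\|F\|_{L^1_x}.$$
Applied pointwise in $t$ to $F=(-\Delta)^{k/2}f(t,\cdot)$, which remains divergence free in $x$, this produces the estimate $\|f(t,\cdot)\|_{L^{\tilde{r}'}_x}\leq C\|(-\Delta)^{k/2}f(t,\cdot)\|_{L^1_x}$ with the choice $\tilde{r}:=n/k$. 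Taking $L^{\tilde{q}'}_t$ on both sides yields
$$\|f\|_{L^{\tilde{q}'}_t L^{\tilde{r}'}_x}\leq C\|(-\Delta)^{k/2}f\|_{L^{\tilde{q}'}_t L^1_x},$$
which, fed into the classical inhomogeneous wave Strichartz estimate with dual pair $(\tilde{q},\tilde{r})=(\tilde{q},n/k)$, produces the bound claimed for $u$. The scaling relation $\tfrac{1}{\tilde{q}'}+\tfrac{n}{\tilde{r}'}=\tfrac{n}{2}+2-s$ underlying the latter, once $n/\tilde{r}'=n-k$ is substituted, agrees exactly with the hypothesized $\tfrac{1}{\tilde{q}'}+n-2-k=\tfrac{n}{2}-s$.

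The main technical difficulty is to check that the dual pair $(\tilde{q},n/k)$ falls inside the valid range of the classical inhomogeneous wave Strichartz estimate---in particular that $\tilde{r}\geq 2$ (equivalently $k\leq n/2$), that wave admissibility of $(\tilde{q},\tilde{r})$ holds, and that no forbidden Keel--Tao endpoint is reached. In dimensions $n=2,3$, the hypothesis $\tilde{q}>4/(n-1)$ is precisely what keeps $(\tilde{q},\tilde{r})$ strictly off the forbidden endpoints $(4,\infty)$ and $(2,\infty)$ respectively. The remaining boundary values of $k$ outside the range $[1,n)$ covered by the van Schaftingen--HLS composition (in particular $k=0$, which recovers Proposition~\ref{prop1}) can be handled via the shift trick used there: applying van Schaftingen only once, to $(-\Delta)^{-1/2}f$, and then a classical Strichartz/energy estimate to the shifted system $\square[(-\Delta)^{-1/2}u]=(-\Delta)^{-1/2}f$.
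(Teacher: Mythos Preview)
Your overall strategy---pass from $L^1_x$ to $L^{\tilde r'}_x$ via van Schaftingen, then apply the classical scalar Strichartz estimate (Lemma~\ref{thm:generalStr})---is exactly the paper's. The gap is in your specific choice $\tilde r = n/k$, i.e.\ $\gamma = 0$. You correctly flag that one must check $\tilde r \geq 2$ and wave admissibility of $(\tilde q,\tilde r)$, but you never verify these, and in fact they can fail. From the scaling relation one has $k = \tfrac{n}{2} - 1 - \tfrac{1}{\tilde q} + s$, so the admissibility requirement $\tfrac{1}{\tilde q} + \tfrac{(n-1)k}{2n} \leq \tfrac{n-1}{4}$ becomes $s \leq 1 - \tfrac{n+1}{(n-1)\tilde q}$. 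But the hypotheses of the theorem allow $s$ anywhere in $[0,\tfrac{n}{2})$; for instance $n=5$, $(q,r)=(\infty,10)$, $\tilde q = \infty$ gives $s = 2$, $k = 7/2$, and then $\tilde r = n/k = 10/7 < 2$, so Lemma~\ref{thm:generalStr} does not apply. Your final paragraph about the ``shift trick'' addresses only the issue of $k$ lying outside $[1,n)$, which is not the obstruction here.

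The fix, which is what the paper does, is to decouple the van Schaftingen exponent from $k$: apply Theorem~\ref{thm:vssp} to $(-\Delta)^{k/2}f(t,\cdot)$ with an exponent $\alpha$ chosen in $(0,\tfrac{n}{2} - \tfrac{2n}{(n-1)\tilde q}]$ (this interval is nonempty precisely under the stated hypotheses, including $\tilde q > \tfrac{4}{n-1}$ when $n=2,3$). This gives $\|(-\Delta)^{(k-\alpha)/2} f\|_{L^{\tilde r'}_x} \leq C\|(-\Delta)^{k/2} f\|_{L^1_x}$ with $\tilde r = n/\alpha$, and now $(\tilde q,\tilde r)$ is automatically wave admissible with $\tilde r \geq 2$. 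One then invokes Lemma~\ref{thm:generalStr} with $\gamma = k - \alpha$, which need not vanish. In short, the extra freedom of a nonzero $\gamma$ in the classical estimate is essential; forcing $\gamma = 0$ loses part of the claimed parameter range.
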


To prove this, the starting point is the following result of van Schaftingen \cite{MR2078071}:

\begin{thm}[van Schaftingen] \label{thm:vssp}
Let $F \colon \mathbb{R}^n \to \mathbb{R}^n$ be a divergence free vector field with components in $L^1$. Then for any $0 < \alpha < n$,
$$\|F\|_{\dot{W}^{-\alpha, \frac{n}{n-\alpha}}} \leq C \|F\|_{L^1}.$$
\end{thm}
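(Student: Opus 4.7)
The plan is to unpack the Sobolev norm using the Riesz potential $I_\alpha = (-\Delta)^{-\alpha/2}$, so that the target estimate becomes
$$\|I_\alpha F\|_{L^{n/(n-\alpha)}} \leq C \|F\|_{L^1} \qquad (\operatorname{div} F = 0),$$
and then to reduce to the endpoint $\alpha = 1$. For $\alpha \geq 1$, I would use Hardy-Littlewood-Sobolev: the factorization $I_\alpha = I_{\alpha-1} \circ I_1$ combined with the HLS mapping $I_{\alpha-1} \colon L^{n/(n-1)} \to L^{n/(n-\alpha)}$ (valid since $\frac{n-1}{n} - \frac{n-\alpha}{n} = \frac{\alpha-1}{n}$ and $\alpha - 1 \in [0, n-1)$) gives $\|I_\alpha F\|_{L^{n/(n-\alpha)}} \lesssim \|I_1 F\|_{L^{n/(n-1)}}$, reducing everything to the case $\alpha = 1$.

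The reduction to the endpoint $\alpha = 1$, namely the Bourgain-Brezis $L^1$ inequality $\|I_1 F\|_{L^{n/(n-1)}} \leq C \|F\|_{L^1}$ for divergence-free $F$, is the crux. In the plane this is essentially Lemma~\ref{lemma1}: one writes $F = (\partial_{x_2} G, -\partial_{x_1} G)$ with $\|G\|_{L^2} \lesssim \|F\|_{L^1}$ by the endpoint Sobolev embedding, and thus $\|F\|_{\dot H^{-1}} \lesssim \|G\|_{L^2} \lesssim \|F\|_{L^1}$. In dimensions $n \geq 3$, however, this endpoint is genuinely deep: it fails without the divergence-free hypothesis, since the endpoint Calder\'on-Zygmund bound $\nabla(-\Delta)^{-1} \colon L^1 \to L^{n/(n-1)}$ does not hold on general $L^1$ scalars. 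Establishing it requires a nonlinear construction exploiting the vanishing divergence, as carried out in \cite{MR2057026}; this is the main obstacle of the proof.

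For the remaining range $0 < \alpha < 1$, the natural attempt is complex interpolation between the trivial bound at $\alpha = 0$ and the Bourgain-Brezis bound at $\alpha = 1$: the Sobolev scale formally identifies $[L^1, \dot W^{-1, n/(n-1)}]_\alpha$ with $\dot W^{-\alpha, n/(n-\alpha)}$, and the divergence-free subspace is closed and stable under interpolation. A more direct alternative, which I expect to be the one van Schaftingen pursues in \cite{MR2078071}, is to dualize: the estimate is equivalent to
$$\left|\int_{\mathbb{R}^n} F \cdot \Phi \, dx\right| \leq C \|F\|_{L^1} \|\Phi\|_{\dot W^{\alpha, n/\alpha}}$$
for all $\Phi \in C^\infty_c(\mathbb{R}^n; \mathbb{R}^n)$, where the divergence-free condition on $F$ makes $\int F \cdot \nabla g \, dx = 0$ for every $g \in C^\infty_c$. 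One then constructs a ``gauge correction'' $\Phi \mapsto \Phi - \nabla g$ producing a pointwise-controlled representative; that a gauge achieving $\|\Phi - \nabla g\|_{L^\infty} \leq C \|\Phi\|_{\dot W^{\alpha, n/\alpha}}$ exists is nontrivial, since $\dot W^{\alpha, n/\alpha}$ is critical and embeds only into BMO, and so the construction must be nonlinear and of the same Bourgain-Brezis flavor as the $\alpha = 1$ endpoint above.
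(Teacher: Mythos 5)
Your reduction of the range $\alpha \geq 1$ to the endpoint $\alpha = 1$ via the factorization $I_\alpha = I_{\alpha-1}\circ I_1$ and Hardy--Littlewood--Sobolev is exactly the paper's argument: the authors phrase it as the Sobolev embedding $\dot W^{-\alpha,\frac{n}{n-\alpha}} \hookrightarrow \dot W^{-\beta,\frac{n}{n-\beta}}$ for $0 < \alpha \leq \beta < n$, which is the same estimate. The difference is in how the range $0 < \alpha \leq 1$ is handled: the paper does not prove it at all, but cites \cite{MR2078071}, where van Schaftingen states the theorem for precisely that range. You correctly observe that this range cannot be recovered from the $\alpha = 1$ endpoint (the embedding only goes in the direction of increasing $\alpha$), and this range is genuinely needed downstream (e.g.\ in the proof of Theorem~\ref{thm:homoStr} when $n=2$ and $\tilde q$ is close to $4$, the admissible $\alpha$ are forced to be small).

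Your own treatment of $0 < \alpha < 1$ is where the gap lies. The complex interpolation route is not justified: the identification $[L^1, \dot W^{-1,n/(n-1)}]_\alpha = \dot W^{-\alpha, n/(n-\alpha)}$ sits at the $p=1$ endpoint of the Sobolev scale, where the standard interpolation theorems do not apply, and in any case what you would need is the embedding of the intersection $L^1 \cap \dot W^{-1,n/(n-1)}$ into the intermediate space, which does not follow from a formal identification of interpolation spaces. The dual ``gauge correction'' sketch is closer in spirit to how these inequalities are actually proved, but as stated it asks for a decomposition $\Phi = \nabla g + (\Phi - \nabla g)$ with $\|\Phi - \nabla g\|_{L^\infty} \lesssim \|\Phi\|_{\dot W^{\alpha, n/\alpha}}$, which is not established (and is not what van Schaftingen does). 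A historical correction that matters here: the estimate $\bigl|\int F\cdot\Phi\bigr| \leq C\|F\|_{L^1}\|\nabla\Phi\|_{L^n}$ for divergence-free $F$ does \emph{not} require the nonlinear Bourgain--Brezis construction of \cite{MR2057026}; van Schaftingen's proof in \cite{MR2078071} is a short, elementary slicing argument (integrating along lines and applying Morrey/Sobolev embedding on hyperplanes), and it is that argument, not a gauge decomposition, that yields the fractional range $0 < \alpha \leq 1$. So your proposal is correct and coincides with the paper for $\alpha \geq 1$, defensible by citation at $\alpha = 1$, but incomplete for $0 < \alpha < 1$, which is exactly the part the paper outsources to \cite{MR2078071}.
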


The Theorem was stated in \cite{MR2078071} only for $0 < \alpha \leq 1$, but the rest of the theorem follows easily from Sobolev embedding of $\dot{W}^{-\alpha, \frac{n}{n-\alpha}}$ into $\dot{W}^{-\beta, \frac{n}{n-\beta}}$ in $\mathbb{R}^n$ if $0 < \alpha \leq \beta < n$.

We also need the following version of the Strichartz estimate for
the scalar equation. It is stated in Proposition 3.1 of Ginibre-Velo
\cite{MR1351643} for the non-endpoint case (where both $(q,r),
(\tilde{q},\tilde{r}) \ne (2, \frac{2(n-1)}{n-3})$), and the
endpoint case can be proved using the technology of Keel-Tao
\cite{MR1646048}.
\begin{lemma} \label{thm:generalStr}
Suppose $n \geq 2$, and let $u \colon \mathbb{R}^{1+n} \to \mathbb{R}$ be a (weak) solution of
$$
\begin{cases}
\square u = h \\
\left. u \right|_{t = 0} = u_0 \\
\left. \partial_t u \right|_{t = 0} = u_1
\end{cases}
$$
Suppose $s, \gamma \in \mathbb{R}$, $2 \leq q, \tilde{q}  \leq \infty$, $2 \leq r, \tilde{r} < \infty$, $(q,r)$ and $(\tilde{q},\tilde{r})$ satisfy the wave admissibility conditions
$$\frac{1}{q} + \frac{n-1}{2r} \leq \frac{n-1}{4}, \quad \frac{1}{\tilde{q}} + \frac{n-1}{2\tilde{r}} \leq \frac{n-1}{4},$$
and the following scale invariance condition is verified:
$$\frac{1}{q} + \frac{n}{r} = \frac{n}{2} - s = \frac{1}{\tilde{q}'} + \frac{n}{\tilde{r}'} - 2 - \gamma.$$
Then
\begin{align*}
&\|u\|_{L^q_t L^r_x} + \|u\|_{C^0_t \dot{H}^s_x} + \|\partial_t u\|_{C^0_t \dot{H}^{s-1}_x} \\
\leq & C \left( \|u_0\|_{\dot{H}^s} + \|u_1\|_{\dot{H}^{s-1}} + \|(-\Delta)^{\frac{\gamma}{2}} h\|_{L^{\tilde{q}'}_t L^{\tilde{r}'}_x} \right).
\end{align*}
\end{lemma}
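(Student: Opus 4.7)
Lemma~\ref{thm:generalStr} is the classical mixed-norm Strichartz estimate for the scalar wave equation, with a fractional derivative $(-\Delta)^{\gamma/2}$ on the forcing inserted precisely to accommodate scale-matching between two arbitrary admissible pairs.  The plan is to split $u = u_h + u_i$ into the homogeneous solution and the Duhamel integral and estimate each piece separately, with all derivative losses tracked through the scale invariance identity $\frac{1}{q}+\frac{n}{r} = \frac{n}{2}-s = \frac{1}{\tilde q'}+\frac{n}{\tilde r'}-2-\gamma$.

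For the homogeneous piece $u_h = \cos(t\sqrt{-\Delta}) u_0 + \sin(t\sqrt{-\Delta})(-\Delta)^{-1/2} u_1$, I would decompose the trigonometric propagators into $e^{\pm it\sqrt{-\Delta}}$ and invoke the one-sided Strichartz inequality $\|e^{\pm it\sqrt{-\Delta}} f\|_{L^q_t L^r_x} \leq C \|f\|_{\dot H^\sigma}$, valid for admissible $(q,r)$ with $\sigma = \frac{n}{2}-\frac{n}{r}-\frac{1}{q}$; by the scale invariance condition this $\sigma$ is precisely $s$, so the data contributions $\|u_0\|_{\dot H^s}$ and $\|u_1\|_{\dot H^{s-1}}$ arise automatically.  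The $C^0_t \dot H^s$ and $C^0_t \dot H^{s-1}$ components of $u_h$ are controlled by conservation of energy at level $s$.

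For the inhomogeneous piece, Duhamel's formula together with the $TT^*$ mechanism reduces the bound $\|u_i\|_{L^q_t L^r_x} \leq C \|(-\Delta)^{\gamma/2} h\|_{L^{\tilde q'}_t L^{\tilde r'}_x}$ to a bilinear estimate for the kernel $(-\Delta)^{\gamma/2 - 1}\sin((t-\tau)\sqrt{-\Delta})$ paired against test functions in $L^{q'}_t L^{r'}_x$; the stated scaling identity is exactly what is needed for both sides to live at compatible Sobolev levels.  In the non-endpoint regime, where both $(q,r)$ and $(\tilde q, \tilde r)$ differ from $(2, \frac{2(n-1)}{n-3})$, this is Proposition~3.1 of Ginibre-Velo \cite{MR1351643}: the Christ-Kiselev lemma converts the untruncated bilinear estimate, obtained by composing the one-sided Strichartz bounds for the two admissible pairs, into the retarded one.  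At the endpoint $(q,r) = (2, \frac{2(n-1)}{n-3})$, which arises only for $n \geq 4$, Christ-Kiselev no longer applies and one appeals to the Keel-Tao \cite{MR1646048} atomic decomposition in dyadic time slabs.  The $C^0_t \dot H^s$ and $C^0_t \dot H^{s-1}$ bounds for $u_i$ follow by rerunning the same $TT^*$ argument with the (admissible) pair $(\infty, 2)$ in place of $(q,r)$.

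The main obstacle is of course the endpoint case, which cannot be recovered from Christ-Kiselev alone and genuinely requires the Keel-Tao dyadic bilinear decomposition; since however this lemma enters our paper only as a known ingredient for Theorem~\ref{thm:homoStr}, we follow the authors in citing \cite{MR1351643} and \cite{MR1646048} rather than reproducing the endpoint argument here.
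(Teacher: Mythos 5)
Your proposal is correct and follows essentially the same route as the paper: the non-endpoint case is delegated to Proposition 3.1 of Ginibre--Velo and the endpoint case to the Keel--Tao machinery, with the derivative gap $(-\Delta)^{\gamma/2}$ accounted for exactly by the scale invariance identity (your bookkeeping $\sigma_1+\sigma_2 = 1+\gamma$ checks out). The only cosmetic difference is that the paper disposes of the fractional derivative in the endpoint case by a Littlewood--Paley reduction to unit frequency, where $(-\Delta)^{\gamma/2}$ can be dropped and Theorem 1.2 of Keel--Tao applies directly, whereas you absorb it into the Sobolev indices of the one-sided estimates inside the $TT^*$ argument; both are standard and equivalent.
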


For the convenience of the reader, we pause to outline a proof of the end-point case of Lemma~\ref{thm:generalStr}:

\begin{proof}[Proof of the end-point case of Lemma~\ref{thm:generalStr}] 
The desired estimate of $\|u\|_{C^0_t \dot{H}^s_x} +
\|\partial_t u\|_{C^0_t \dot{H}^{s-1}_x}$ follows from the statement
of Corollary 1.3 of \cite{MR1646048}. To prove
$$\|u\|_{L^q_t L^r_x} \leq C \|(-\Delta)^{\frac{\gamma}{2}}
h\|_{L^{\tilde{q}'}_t L^{\tilde{r}'}_x},$$ one observes that since
$2 \leq q, \tilde{q} \leq \infty$, $2 \leq r, \tilde{r} < \infty$,
one can restrict attention to the situation where the frequency
support of $h(t,\cdot)$ is contained in an annulus of size $2^j$ by
using the Littlewood-Paley square function. By scale invariance we
can take $j = 0$. In that case $(-\Delta)^{\frac{\gamma}{2}
 }$ on the right hand side can be dropped, and the result follows from Theorem 1.2 of \cite{MR1646048}.
\end{proof}

Theorem~\ref{thm:homoStr} can be seen as the limiting case of
Lemma~\ref{thm:generalStr} when $\tilde{r} = \infty$ except when
$(n,\tilde{q},\tilde{r})=(2,4,\infty)$ or $(3,2,\infty)$. It says
one still has the Strichartz inequality if in addition $f$ is a
vector field at each time $t$, and $f(t,x)$ is divergence free for
all $t$.

\begin{proof}[Proof of Theorem \ref{thm:homoStr}] Assume $n$, $q$, $\tilde{q}$, $r$, $k$ and $s$ be as given in the statement of the Theorem. Then when $n \geq 4$, from $2 \leq \tilde{q} \leq \infty$ one automatically has $$\frac{n}{2} - \frac{2n}{(n-1)\tilde{q}} > 0,$$ and the same inequality holds when $n = 2$ or $3$ because then we assumed $\tilde{q} > \frac{4}{n-1}$. As a result, one can pick some $\alpha \in (0, \frac{n}{2} - \frac{2n}{(n-1)\tilde{q}}]$. Now let $\tilde{r} = \frac{n}{\alpha}$, and $\gamma = k - \alpha$. Then $\tilde{r} < \infty$, $\frac{1}{\tilde{q}} + \frac{n-1}{2\tilde{r}} \leq \frac{n-1}{4}$, which in particular implies that $\tilde{r} \geq 2$. The scale invariance condition in Lemma~\ref{thm:generalStr} is also verified. Hence
\begin{align*}
&\|u\|_{L^q_t L^r_x} + \|u\|_{C^0_t \dot{H}^s_x} + \|\partial_t u\|_{C^0_t \dot{H}^{s-1}_x} \\
\leq & C \left( \|u_0\|_{\dot{H}^s} + \|u_1\|_{\dot{H}^{s-1}} + \|(-\Delta)^{\frac{k-\alpha}{2}} f\|_{L^{\tilde{q}'}_t L^{\tilde{r}'}_x} \right).
\end{align*}
Now invoking Theorem~\ref{thm:vssp} and the divergence free condition on $f$ for each time $t$, we get $$\|(-\Delta)^{\frac{k-\alpha}{2}} f\|_{L^{\tilde{r}'}_x} \leq C \|(-\Delta)^{\frac{k}{2}} f\|_{L^1_x},$$ from which the desired inequality follows. Note this is possible because $\alpha \in (0,n)$ automatically by our choice of $\alpha$.
\end{proof}

We remark that under the conditions of Theorem~\ref{thm:homoStr}, we necessarily have $s \geq 0$, and when $n \geq 3$ we necessarily have $k > 0$. In fact $k \geq \frac{n-3}{2}$ when $n \geq 3$, and $k = 0$ is impossible when $n = 3$ because we assumed that $\tilde{q} > 2$ when $n = 3$.

We also remark that in Theorem~\ref{thm:homoStr}, when the initial conditions $u_0$ and $u_1$ are zero, one can actually obtain a wider range of exponents for which the desired inequality holds. This can be thought of as a limiting case of an inhomogeneous Strichartz estimate of Taggart \cite{Taggart}, whose origin goes back to the work of Foschi \cite{MR2134950}. To illustrate this, we state the following Theorem in 3 space dimensions.

\begin{thm} \label{thm:inhomowave}
Suppose $n = 3$, and let $u \colon \mathbb{R}^{1+3} \to \mathbb{R}^3$ be a (weak) solution of the system
$$
\begin{cases}
\square u = f \\
\left. u \right|_{t = 0} = 0 \\
\left. \partial_t u \right|_{t = 0} = 0
\end{cases}
$$
where $f(t,x) \colon \mathbb{R}^{1+3} \to \mathbb{R}^3$ is a divergence free vector field for all $t$. Suppose $k \in \mathbb{R}$, $1 < q, \tilde{q}  \leq \infty$, $2 \leq r < \infty$, and
$$
\frac{1}{q} + \frac{1}{\tilde{q}} < \min \left\{1, \frac{k+1}{2}\right\}.
$$
Suppose further that $(q,r)$ satisfies the wave acceptability condition
$$\frac{1}{q} + \frac{2}{r} < 1 \quad \text{or} \quad (q,r) = (\infty,2),$$
and that the following scale invariance condition is verified:
$$\frac{1}{q} + \frac{3}{r} = 2 - k - \frac{1}{\tilde{q}}.$$
Then
$$
\|u\|_{L^q_t L^r_x} \leq  C \|(-\Delta)^{\frac{k}{2}} f\|_{L^{\tilde{q}'}_t L^1_x}.
$$
\end{thm}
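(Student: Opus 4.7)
The strategy parallels that of Theorem~\ref{thm:homoStr}. The new ingredient is that, in place of the admissible-pair Strichartz estimate of Lemma~\ref{thm:generalStr}, I would invoke the inhomogeneous Strichartz estimate of Taggart~\cite{Taggart} (following Foschi~\cite{MR2134950}), which holds for wave acceptable pairs and thus admits a broader range of exponents. The plan is to trade some derivatives on $f$ for an increase in spatial integrability via Theorem~\ref{thm:vssp}, landing inside Taggart's admissible range.

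First, I would introduce a parameter $\alpha > 0$ to be chosen small, and set $\tilde r = 3/\alpha$ and $\gamma = k - \alpha$. Since by hypothesis $\tilde q > 1$, for $\alpha$ small enough the pair $(\tilde q, \tilde r)$ satisfies the wave acceptability condition $\frac{1}{\tilde q} + \frac{2}{\tilde r} < 1$, and $\tilde r \geq 2$. A direct calculation shows that the scale invariance condition
\[\frac{1}{q} + \frac{3}{r} = \frac{1}{\tilde q'} + \frac{3}{\tilde r'} - 2 - \gamma\]
required by Taggart reduces, with these choices, to the scale invariance hypothesis of the present Theorem. Furthermore, the supplementary derivative constraint in Taggart's theorem, which for the wave equation takes the form $\frac{1}{q} + \frac{1}{\tilde q} < \min\{1, \frac{\gamma+1}{2}\}$, becomes $\frac{1}{q} + \frac{1}{\tilde q} < \min\{1, \frac{k+1-\alpha}{2}\}$; this is implied, for $\alpha > 0$ sufficiently small, by the strict hypothesis $\frac{1}{q} + \frac{1}{\tilde q} < \min\{1, \frac{k+1}{2}\}$.

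Second, applying Taggart's inhomogeneous Strichartz estimate in three dimensions yields
\[\|u\|_{L^q_t L^r_x} \leq C \|(-\Delta)^{(k-\alpha)/2} f\|_{L^{\tilde q'}_t L^{\tilde r'}_x}.\]
Next I would freeze $t$, observe that the vector field $g(t,\cdot) := (-\Delta)^{k/2} f(t,\cdot)$ is still divergence free because $(-\Delta)^{k/2}$ commutes with each $\partial_{x_j}$, and apply Theorem~\ref{thm:vssp} with parameter $\alpha \in (0,3)$ to obtain
\[\|(-\Delta)^{(k-\alpha)/2} f(t,\cdot)\|_{L^{\tilde r'}_x} = \|(-\Delta)^{-\alpha/2} g(t,\cdot)\|_{L^{3/(3-\alpha)}_x} \leq C \|g(t,\cdot)\|_{L^1_x}.\]
Taking $L^{\tilde q'}_t$ norms in time then gives the desired bound.

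The main obstacle will be to pin down the exact form of Taggart's hypotheses and verify that a single $\alpha > 0$ can be chosen to satisfy simultaneously the wave acceptability of $(\tilde q, \tilde r)$, the strict upper bound $\alpha < \frac{3}{2}(1 - \frac{1}{\tilde q})$, and the Taggart derivative constraint. It is precisely the fact that the hypothesis on $\frac{1}{q}+\frac{1}{\tilde q}$ is a \emph{strict} inequality that allows one to absorb the loss incurred by the shift $\gamma = k - \alpha$; once $\alpha$ is fixed, the rest of the argument follows the template set by Theorem~\ref{thm:homoStr}.
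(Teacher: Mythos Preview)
Your proposal is correct and follows essentially the same route as the paper: choose a small $\alpha>0$, set $\tilde r=3/\alpha$ and $\gamma=k-\alpha$, verify that the strict hypothesis $\frac{1}{q}+\frac{1}{\tilde q}<\min\{1,\frac{k+1}{2}\}$ together with $\tilde q>1$ leaves room to satisfy Taggart's conditions, apply Taggart's inhomogeneous estimate, and then invoke Theorem~\ref{thm:vssp} at each fixed time. Your explicit remark that $(-\Delta)^{k/2}$ preserves the divergence-free condition is a detail the paper leaves implicit; otherwise the two arguments coincide.
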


To prove this, we need the following scalar inhomogeneous Strichartz
estimate, which is a consequence of Corollary 8.7 of Taggart
\cite{Taggart} in 3 space dimensions:

\begin{thm}[Taggart] \label{thm:Taggart}
Suppose $n = 3$, and let $u \colon \mathbb{R}^{1+3} \to \mathbb{R}$ be a (weak) solution of
$$
\begin{cases}
\square u = h \\
\left. u \right|_{t = 0} = 0 \\
\left. \partial_t u \right|_{t = 0} = 0.
\end{cases}
$$
Suppose $\gamma \in \mathbb{R}$, $1 < q, \tilde{q}  \leq \infty$, $2 \leq r, \tilde{r} < \infty$,
$$
\frac{1}{q} + \frac{1}{\tilde{q}} < 1, \quad \text{and} \quad \frac{1}{q} + \frac{1}{\tilde{q}} \leq \frac{\gamma+1}{2}.
$$
Suppose further that the exponents satisfy the wave acceptability condition
$$\frac{1}{q} + \frac{2}{r} < 1 \quad \text{or} \quad (q,r) = (\infty,2),$$
$$\frac{1}{\tilde{q}} + \frac{2}{\tilde{r}} < 1 \quad \text{or} \quad (\tilde{q},\tilde{r}) = (\infty,2),$$
and that the following scale invariance condition is verified:
$$\frac{1}{q} + \frac{3}{r} = 2 - \gamma - \frac{1}{\tilde{q}} - \frac{3}{\tilde{r}}.$$
Then
$$
\|u\|_{L^q_t L^r_x} \leq  C \|(-\Delta)^{\frac{\gamma}{2}} h\|_{L^{\tilde{q}'}_t L^{\tilde{r}'}_x}.
$$
\end{thm}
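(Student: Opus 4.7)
The plan is to derive Theorem~\ref{thm:Taggart} from Corollary 8.7 of Taggart \cite{Taggart} after a Littlewood--Paley reduction. Taggart's corollary furnishes inhomogeneous Strichartz estimates for the 3D wave equation in the non-sharp admissible regime, with his hypotheses of acceptability and the off-diagonal condition $1/q + 1/\tilde{q} < 1$ matching ours verbatim. Since our statement carries a fractional derivative $(-\Delta)^{\gamma/2}$, I would first decompose $h = \sum_j P_j h$ dyadically in the spatial frequency and set $u_j$ to solve $\square u_j = P_j h$ with zero data, so that $u = \sum_j u_j$ with each $u_j$ frequency-localized to $|\xi| \sim 2^j$.

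A parabolic rescaling $(t,x) \mapsto (2^{-j} t, 2^{-j} x)$ reduces each $u_j$ to a wave at frequency one. On the unit annulus the multiplier $(-\Delta)^{\gamma/2}$ is bounded on every $L^{\tilde{r}'}_x$, so it can be absorbed into a constant; the scale-invariance identity $1/q + 3/r = 2 - \gamma - 1/\tilde{q} - 3/\tilde{r}$ is precisely what matches the dyadic factors produced on the two sides. At frequency one, Corollary 8.7 of \cite{Taggart} delivers
$$\|u_j\|_{L^q_t L^r_x} \leq C \|P_j h\|_{L^{\tilde{q}'}_t L^{\tilde{r}'}_x}$$
provided $(q, r)$ and $(\tilde{q}, \tilde{r})$ are wave-acceptable and $1/q + 1/\tilde{q} < 1$, $1/q + 1/\tilde{q} \leq (\gamma+1)/2$ --- exactly our hypotheses. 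Undoing the rescaling reinserts the $(-\Delta)^{\gamma/2}$ factor on the right-hand side.

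Finally, the dyadic pieces are reassembled via the spatial Littlewood--Paley square-function characterization of the mixed norms $L^q_t L^r_x$ and $L^{\tilde{q}'}_t L^{\tilde{r}'}_x$, valid since $r, \tilde{r}' \in (1, \infty)$. The main obstacle is the $\ell^2_j$-summation in the mixed-norm topology: a direct Minkowski argument closes cleanly when $q, \tilde{q} \geq 2$, whereas the full range $q, \tilde{q} \in (1, \infty]$ permitted here requires the vector-valued form of Taggart's estimate, obtained by re-reading his bilinear $T^*T$ proof with $\ell^2$-valued functions. The exceptional endpoints $(q, r) = (\infty, 2)$ and $(\tilde{q}, \tilde{r}) = (\infty, 2)$, which violate the strict acceptability inequality, must be treated separately via the Keel--Tao \cite{MR1646048} energy/endpoint mechanism already encapsulated in Lemma~\ref{thm:generalStr}.
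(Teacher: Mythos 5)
Your route (Littlewood--Paley decomposition, parabolic rescaling to unit frequency, reassembly by square functions) is not the paper's, and it contains a genuine gap that you flag but do not close: the $\ell^2_j$ reassembly. The theorem allows $1 < q, \tilde{q} \leq \infty$, so $q < 2$ or $\tilde{q} < 2$ (hence $\tilde{q}' > 2$) is permitted, and there Minkowski's inequality between $\ell^2_j$ and the mixed time--space norm goes the wrong way. Asserting that one can ``re-read Taggart's bilinear $T^*T$ proof with $\ell^2$-valued functions'' is not a proof; it is a substantial claim about a result you are otherwise citing as a black box, and nothing in Corollary 8.7 as stated supplies it. Your fallback for the exceptional pairs via Lemma~\ref{thm:generalStr} and Keel--Tao also fails to cover $(q,r) = (\infty,2)$ when $(\tilde{q},\tilde{r})$ is merely acceptable but not admissible (e.g.\ $\tilde{q} < 2$), since that lemma requires both exponent pairs to be wave-admissible and $\tilde q\ge 2$.

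The decomposition is also unnecessary. Taggart's Corollary 8.7 already permits non-sharp exponents (a built-in Sobolev embedding), and the paper exploits exactly that: from the scale-invariance condition together with the hypothesis $\frac{1}{q} + \frac{1}{\tilde{q}} \leq \frac{\gamma+1}{2}$ one checks that $\frac{1}{r} + \frac{1}{\tilde{r}} \leq 1 - \frac{1}{q} - \frac{1}{\tilde{q}}$, hence one can pick auxiliary exponents $r_1 \leq r$, $\tilde{r}_1 \leq \tilde{r}$ in $[2,\infty)$, still acceptable, with $\frac{1}{r_1} + \frac{1}{\tilde{r}_1} = 1 - \frac{1}{q} - \frac{1}{\tilde{q}}$, after which Corollary 8.7 applies directly, with no frequency localization and no summation issue. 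Note that in your argument the hypothesis $\frac{1}{q} + \frac{1}{\tilde{q}} \leq \frac{\gamma+1}{2}$ never does any real work: even at unit frequency you would need Bernstein to pass to auxiliary exponents satisfying Taggart's own scaling condition, and that hypothesis is precisely what guarantees such exponents exist. That piece of arithmetic is the actual content of the paper's proof and is absent from yours.
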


\begin{proof}[Proof of Theorem~\ref{thm:Taggart}] 
Under the conditions of Theorem~\ref{thm:Taggart}, one has
$$\frac{1}{q} + \frac{2}{r} < 1 \quad \text{or} \quad (q,r) =
(\infty,2),$$
$$\frac{1}{\tilde{q}} + \frac{2}{\tilde{r}} < 1 \quad \text{or} \quad (\tilde{q},\tilde{r}) = (\infty,2),$$ and $$\frac{1}{r} + \frac{1}{\tilde{r}} \leq 1 - \frac{1}{q} - \frac{1}{\tilde{q}},$$ the last inequality following from the condition $\frac{1}{q} + \frac{1}{\tilde{q}} \leq \frac{\gamma+1}{2}$ and the scale invariance condition. Thus one can find $r_1 \leq r$, $\tilde{r_1} \leq \tilde{r}$ such that the wave acceptability conditions
$$\frac{1}{q} + \frac{2}{r_1} < 1 \quad \text{or} \quad (q,r_1) = (\infty,2)$$
and
$$\frac{1}{\tilde{q}} + \frac{2}{\tilde{r_1}} < 1 \quad \text{or} \quad (\tilde{q},\tilde{r_1}) = (\infty,2),$$
are satisfied, with
$$
\frac{1}{r_1} + \frac{1}{\tilde{r_1}} = 1 - \frac{1}{q} -
\frac{1}{\tilde{q}}.
$$
Clearly $r_1, \tilde{r_1} \in [2,\infty)$. As a result, Corollary
8.7 of Taggart \cite{Taggart} applies, yielding
Theorem~\ref{thm:Taggart}.
\end{proof}

\begin{proof}[Proof of Theorem~\ref{thm:inhomowave}]
Assume $q$, $\tilde{q}$, $r$ and $k$ be as given in the statement of the Theorem. Then since $$\frac{1}{q} + \frac{1}{\tilde{q}} < \frac{k+1}{2} \quad \text{and} \quad \frac{1}{\tilde{q}} < 1,$$ one can pick a small $\alpha > 0$ such that $$\frac{1}{q} + \frac{1}{\tilde{q}} \leq \frac{(k-\alpha)+1}{2} \quad \text{and} \quad \frac{1}{\tilde{q}} + \frac{2 \alpha}{3} < 1.$$ Now let $\tilde{r} = \frac{3}{\alpha}$, and $\gamma = k - \alpha$. Then $\tilde{r} < \infty$, $\frac{1}{q} + \frac{1}{\tilde{q}} \leq \frac{\gamma+1}{2}$, $\frac{1}{\tilde{q}} + \frac{2}{\tilde{r}} < 1$, which in particular implies that $\tilde{r} > 2$. The scale invariance condition in Theorem~\ref{thm:Taggart} is also verified. Hence
$$
\|u\|_{L^q_t L^r_x} \leq C \|(-\Delta)^{\frac{k-\alpha}{2}} f\|_{L^{\tilde{q}'}_t L^{\tilde{r}'}_x}.
$$
Now invoking Theorem~\ref{thm:vssp} and the divergence free condition on $f$ for each time $t$, we get $$\|(-\Delta)^{\frac{k-\alpha}{2}} f\|_{L^{\tilde{r}'}_x} \leq C \|(-\Delta)^{\frac{k}{2}} f\|_{L^1_x},$$ from which the desired inequality follows. Note this is possible because $\alpha \in (0,3)$ automatically by our choice of $\alpha$; in fact $\alpha < \frac{3}{2}$ since $\frac{1}{\tilde{q}} + \frac{2 \alpha}{3} < 1$.
\end{proof}

\section{Strichartz estimates for the Schrodinger equation}

Again, we consider vector fields $f \colon \mathbb{R}^{1+n} \to \mathbb{R}^n$. The main result is the following.

\begin{thm} \label{thm:Schrod}
Suppose $n \geq 2$, and $u \colon \mathbb{R}^{1+n} \to \mathbb{R}^n$ is a (weak) solution of the system of Schrodinger equations
$$
\begin{cases}
i \partial_t u + \Delta u = f \\
\left. u \right|_{t=0} = u_0,
\end{cases}
$$
where $f(t,x) \colon \mathbb{R}^{1+n} \to \mathbb{R}^n$ is a divergence free vector field for all $t$. Suppose $2 \leq q, \tilde{q} \leq \infty$, $2 \leq r < \infty$, $s \geq 0$, $k > s$, and the following scale invariance conditions are satisfied:
$$
\frac{2}{q} + \frac{n}{r} = \frac{n}{2} - s, \qquad \frac{2}{\tilde{q}} = \frac{n}{2} - k + s.
$$
Then
$$
\|u\|_{C^0_t \dot{H}^s_x} + \|u\|_{L^q_t L^r_x} \leq C \left( \|u_0\|_{\dot{H}^s} + \|(-\Delta)^{\frac{k}{2}} f\|_{L^{\tilde{q}'}_t L^1_x}\right).
$$
\end{thm}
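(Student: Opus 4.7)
The plan is to mimic the proof of Theorem~\ref{thm:homoStr}: pick an auxiliary Schr\"odinger-admissible pair $(\tilde q,\tilde r)$ with $\tilde r<\infty$, apply a standard scalar Schr\"odinger Strichartz estimate with that pair, and then use van Schaftingen's inequality (Theorem~\ref{thm:vssp}) to trade $L^{\tilde r'}_x$ for $L^1_x$ at the cost of shifting $\alpha$ derivatives onto $f$.

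Concretely, I would set $\alpha=k-s$. The hypothesis $k>s$ gives $\alpha>0$, while the identity $\frac{2}{\tilde q}=\frac{n}{2}-(k-s)$ together with $\tilde q\geq 2$ forces $\alpha\leq n/2$; in particular $\alpha\in(0,n)$. Letting $\tilde r=n/\alpha\in[2,\infty)$, one checks $\frac{2}{\tilde q}+\frac{n}{\tilde r}=\frac{n}{2}$, so $(\tilde q,\tilde r)$ is Schr\"odinger-admissible. Invoking the standard scalar Schr\"odinger Strichartz estimate componentwise (with the $s$ derivatives floated onto the inhomogeneity, and Sobolev embedding on $\mathbb{R}^n$ used to pass from $(-\Delta)^{s/2}u\in L^q_t L^{r_0}_x$ to $u\in L^q_t L^r_x$ for the appropriate $r_0$), one obtains
\begin{equation*}
\|u\|_{C^0_t\dot H^s_x}+\|u\|_{L^q_t L^r_x}\leq C\bigl(\|u_0\|_{\dot H^s}+\|(-\Delta)^{s/2}f\|_{L^{\tilde q'}_t L^{\tilde r'}_x}\bigr).
\end{equation*}

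Finally, since the scalar Fourier multiplier $(-\Delta)^{k/2}$ commutes with the spatial divergence, $(-\Delta)^{k/2}f$ remains divergence free at each $t$. Applying Theorem~\ref{thm:vssp} to this vector field with the exponent $\alpha\in(0,n)$ fixed above produces, for each fixed $t$,
\begin{equation*}
\|(-\Delta)^{s/2}f\|_{L^{\tilde r'}_x}=\|(-\Delta)^{-\alpha/2}[(-\Delta)^{k/2}f]\|_{L^{n/(n-\alpha)}_x}\leq C\|(-\Delta)^{k/2}f\|_{L^1_x},
\end{equation*}
and integrating in $L^{\tilde q'}_t$ closes the argument. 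The main delicate points are the borderline choices of $\alpha$: the case $\alpha=n/2$ (so $\tilde r=2$, $\tilde q=\infty$) reduces to the energy inequality, while the case $\alpha=(n-2)/2$ with $n\geq 3$ sits at the Keel--Tao endpoint $(\tilde q,\tilde r)=(2,2n/(n-2))$; in dimension $n=2$ the latter endpoint is automatically avoided because $k>s$ forces $\tilde q>2$.
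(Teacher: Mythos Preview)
Your proof is correct and follows essentially the same strategy as the paper's: choose an auxiliary $\alpha$, set $\tilde r = n/\alpha$, apply the scalar Schr\"odinger Strichartz estimate, and then invoke Theorem~\ref{thm:vssp} at each fixed time. The only difference is that the paper allows any $\alpha \in (0, \min\{k-s, n/2\}]$ whereas you take the specific endpoint value $\alpha = k-s$, which amounts to using the Keel--Tao estimate directly with an admissible pair $(\tilde q,\tilde r)$; note incidentally that the bound $\alpha \leq n/2$ comes from $2/\tilde q \geq 0$, not from $\tilde q \geq 2$ as you wrote.
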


In its proof we need Theorem~\ref{thm:vssp} in the previous Section, as well as the following Strichartz inequality for the scalar Schrodinger equation (which follows from Corollary 1.4 of Keel-Tao \cite{MR1646048} and the Sobolev inequality):

\begin{lemma}
Suppose $n \geq 2$, and $u \colon \mathbb{R}^{1+n} \to \mathbb{R}$ is a (weak) solution of
$$
\begin{cases}
i \partial_t u + \Delta u = h \\
\left. u \right|_{t=0} = u_0.
\end{cases}
$$
Suppose $2 \leq q, \tilde{q} \leq \infty$, $2 \leq r, \tilde{r} < \infty$, $s \geq 0$, $\gamma > s$, and the following scale invariance conditions are satisfied:
$$
\frac{2}{q} + \frac{n}{r} = \frac{n}{2} - s, \qquad \frac{2}{\tilde{q}} + \frac{n}{\tilde{r}} = \frac{n}{2} - \gamma + s.
$$
Then
$$
\|u\|_{C^0_t \dot{H}^s_x} + \|u\|_{L^q_t L^r_x} \leq C \left( \|u_0\|_{\dot{H}^s} + \|(-\Delta)^{\frac{\gamma}{2}} f\|_{L^{\tilde{q}'}_t L^{\tilde{r}'}_x}\right).
$$
\end{lemma}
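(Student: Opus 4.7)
The plan is to reduce to the $s=0$ case of Keel--Tao by conjugating $u$ with the fractional Laplacian, and then to recover the correct $s$, $r$, and $\tilde{r}$ indices via Sobolev embedding on both sides of the estimate.

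First I would set $v = (-\Delta)^{s/2} u$, which solves $i\partial_t v + \Delta v = (-\Delta)^{s/2} h$ with $v(0) = (-\Delta)^{s/2} u_0$. I would then introduce Schr\"odinger-admissible pairs $(Q,R)$ and $(\tilde{Q},\tilde{R})$ defined by $Q = q$, $\tilde{Q} = \tilde{q}$, and
$$\frac{1}{R} = \frac{1}{r} + \frac{s}{n}, \qquad \frac{1}{\tilde{R}} = \frac{1}{\tilde{r}} + \frac{\gamma - s}{n}.$$
The two scale invariance hypotheses of the Lemma translate directly into the Schr\"odinger admissibility relations $\frac{2}{Q}+\frac{n}{R} = \frac{n}{2} = \frac{2}{\tilde{Q}}+\frac{n}{\tilde{R}}$. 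The constraints $q, \tilde{q} \geq 2$ automatically force $R, \tilde{R} \geq 2$, while $s \geq 0$ and $\gamma > s$ together with $r, \tilde{r} < \infty$ force $R \leq r < \infty$ and $\tilde{R} < \infty$, so in particular the forbidden endpoint $(2,\infty)$ in dimension two is avoided.

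Corollary 1.4 of Keel--Tao \cite{MR1646048} applied to $v$ then delivers
$$\|v\|_{C^0_t L^2_x} + \|v\|_{L^Q_t L^R_x} \leq C \bigl( \|v(0)\|_{L^2} + \|(-\Delta)^{s/2} h\|_{L^{\tilde{Q}'}_t L^{\tilde{R}'}_x} \bigr).$$
On the left-hand side, $\|v\|_{C^0_t L^2_x} = \|u\|_{C^0_t \dot{H}^s_x}$, and the Sobolev embedding for the Riesz potential $(-\Delta)^{-s/2}$ (which is trivial when $s=0$ and is the standard Hardy--Littlewood--Sobolev inequality when $s>0$, using $R \geq 2 > 1$ and $r < \infty$) yields $\|u\|_{L^r_x} \leq C \|v\|_{L^R_x}$; taking $L^q_t = L^Q_t$ of both sides gives $\|u\|_{L^q_t L^r_x} \leq C \|v\|_{L^Q_t L^R_x}$. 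On the right-hand side, $\|v(0)\|_{L^2} = \|u_0\|_{\dot{H}^s}$, and factoring $(-\Delta)^{s/2} = (-\Delta)^{-(\gamma-s)/2} (-\Delta)^{\gamma/2}$ together with Hardy--Littlewood--Sobolev (valid since $\gamma - s > 0$ and $\tilde{r}, \tilde{R} \in [2,\infty)$) gives $\|(-\Delta)^{s/2} h\|_{L^{\tilde{R}'}_x} \leq C \|(-\Delta)^{\gamma/2} h\|_{L^{\tilde{r}'}_x}$; since $\tilde{Q}' = \tilde{q}'$, integrating in $t$ produces exactly the desired inhomogeneous norm.

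The only real technical obstacle is the exponent book-keeping needed to ensure that both $(Q,R)$ and $(\tilde{Q},\tilde{R})$ lie inside the Keel--Tao admissible range and that the Hardy--Littlewood--Sobolev inequalities applied on each side have non-degenerate exponents; once those are verified, the argument is a direct concatenation of Keel--Tao with two Sobolev embeddings.
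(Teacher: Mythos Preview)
Your proposal is correct and is precisely the approach the paper indicates: the paper simply states that the Lemma ``follows from Corollary 1.4 of Keel--Tao \cite{MR1646048} and the Sobolev inequality,'' and your argument is exactly this, with the exponent book-keeping carried out carefully (the checks that $R,\tilde R\in[2,\infty)$, that the forbidden $n=2$ endpoint is excluded by $r,\tilde r<\infty$, and that the Hardy--Littlewood--Sobolev exponents are non-degenerate are all valid).
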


Theorem~\ref{thm:Schrod} can be thought of as the limiting case of the above Lemma when $\tilde{r} = \infty$, which only works because we assumed that the inhomogeneity $f(t,x)$ is a divergence free vector field at each time $t$.

\begin{proof}[Proof of Theorem~\ref{thm:Schrod}]
Assume $n$, $q$, $\tilde{q}$, $r$, $k$ and $s$ be as given in the statement of the Theorem. Then $k - s > 0,$ one can pick some $\alpha \in (0, \min\{k-s,\frac{n}{2}\}]$. Now let $\tilde{r} = \frac{n}{\alpha}$, and $\gamma = k - \alpha$. Then $2 \leq \tilde{r} < \infty$, $\gamma > s$, and $$\frac{2}{\tilde{q}} + \frac{n}{\tilde{r}} = \frac{n}{2} - \gamma + s.$$
Hence
$$
\|u\|_{C^0_t \dot{H}^s_x} + \|u\|_{L^q_t L^r_x} \leq C \left( \|u_0\|_{\dot{H}^s} + \|(-\Delta)^{\frac{k-\alpha}{2}} f\|_{L^{\tilde{q}'}_t L^{\tilde{r}'}_x}\right).
$$
Now we invoke Theorem~\ref{thm:vssp} and the divergence free condition on $f$ at each time $t$; this is possible because $\alpha \in (0,n)$ automatically by our choice of $\alpha$. Thus we get $$\|(-\Delta)^{\frac{k-\alpha}{2}} f\|_{L^{\tilde{r}'}_x} \leq C \|(-\Delta)^{\frac{k}{2}} f\|_{L^1_x},$$ from which the desired inequality follows.
\end{proof}

\section{Appendix}

In this appendix we prove another improved Strichartz inequality for the wave equation in $\mathbb{R}^{1+2}$. Here we only need to work with scalar equations.

\begin{prop}
Suppose $u \colon \mathbb{R}^{1+2} \to \mathbb{R}$ satisfies
$$
\begin{cases}
\square u = \det(\nabla_x F) \\
\left. u \right|_{t = 0} = u_0 \\
\left. \partial_t u \right|_{t = 0} = u_1
\end{cases}
$$
where $F$ is a map from $\mathbb{R}^{1+2}$ to $\mathbb{R}^2$, and $\det(\nabla_x F)$ denotes its Jacobian determinant in the $x$ variable. Then
$$
\|u\|_{C^0_t L^2_x} + \|\partial_t u\|_{C^0_t \dot{H}^{-1}_x} \leq C \left( \|u_0\|_{L^2} + \|u_1\|_{\dot{H}^{-1}} +  \int \|\nabla_x F\|_{L^2_x}^2 dt \right).
$$
\end{prop}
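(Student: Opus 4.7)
The plan is to verify, at each fixed time $t$, the pointwise estimate
$$
\|\det(\nabla_x F)(t,\cdot)\|_{\dot{H}^{-1}_x} \leq C \|\nabla_x F(t,\cdot)\|_{L^2_x}^2,
$$
and then combine it with the standard energy inequality for the inhomogeneous wave equation,
$$
\|u\|_{C^0_t L^2_x} + \|\partial_t u\|_{C^0_t \dot{H}^{-1}_x} \leq C\bigl( \|u_0\|_{L^2} + \|u_1\|_{\dot{H}^{-1}} + \|\square u\|_{L^1_t \dot{H}^{-1}_x} \bigr),
$$
simply by integrating the pointwise bound in $t$. This mirrors the strategy used in the proof of Proposition~\ref{prop1}: the structural hypothesis on the forcing (there, divergence-free; here, Jacobian structure) is exploited solely to put the right hand side into $\dot{H}^{-1}_x$, after which the energy inequality takes over.

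The key ingredient for the pointwise bound is the Coifman--Lions--Meyer--Semmes theorem on Jacobians: if $F = (F^1, F^2)$ has $\nabla F \in L^2(\mathbb{R}^2)$, then $\det(\nabla F)$ belongs to the real Hardy space $\mathcal{H}^1(\mathbb{R}^2)$ with
$$
\|\det(\nabla F)\|_{\mathcal{H}^1} \leq C \|\nabla F\|_{L^2}^2.
$$
The hidden cancellation is already apparent in the identity
$$
\det(\nabla F) = \partial_{x_1}(F^1 \partial_{x_2} F^2) - \partial_{x_2}(F^1 \partial_{x_1} F^2),
$$
which writes $\det(\nabla F)$ as a divergence and encodes its null-form nature. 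To pass from $\mathcal{H}^1$ to $\dot{H}^{-1}$ I would use the continuous embedding $\mathcal{H}^1(\mathbb{R}^2) \hookrightarrow \dot{H}^{-1}(\mathbb{R}^2)$, which may be obtained either from the boundedness of the Riesz potential $(-\Delta)^{-1/2} \colon \mathcal{H}^1(\mathbb{R}^2) \to L^2(\mathbb{R}^2)$, or by duality from the Sobolev embedding $\dot{H}^1(\mathbb{R}^2) \hookrightarrow BMO(\mathbb{R}^2)$ together with Fefferman's duality $(\mathcal{H}^1)^* = BMO$.

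The main obstacle is really the CLMS step. Without the null-form cancellation, H\"older's inequality only places $\det(\nabla F)$ in $L^1$, and this is insufficient to control the $\dot{H}^{-1}$ norm since the Riesz transforms fail to be bounded on $L^1$ in two dimensions; a naive attempt through the divergence representation above would instead require $F^1 \nabla^\perp F^2 \in L^2$, which does not follow from $\nabla F \in L^2$ in the critical scaling of $\mathbb{R}^2$. Once the CLMS improvement is in hand, the remainder of the argument is a routine energy estimate.
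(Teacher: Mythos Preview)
Your proposal is correct and matches the paper's own argument: the paper likewise reduces to the pointwise bound $\|(-\Delta)^{-1/2}\det(\nabla_x F)\|_{L^2_x}\le C\|\nabla_x F\|_{L^2_x}^2$ and then invokes the classical energy estimate as in Proposition~\ref{prop1}, proving the pointwise bound via CLMS compensated compactness and the embedding $\mathcal{H}^1\hookrightarrow \dot{H}^{-1}$ exactly as you do. The only addition in the paper is that it also cites Wente's inequality (Chanillo--Li) as a direct route to the same $\dot H^{-1}$ bound, bypassing the Hardy space step.
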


It is clear that $\|\nabla_x F\|_{L^2_x}^2$ controls the $L^1_x$ norm of $\det(\nabla_x F)$, but unfortunately this is not enough if one wants to prove the Proposition. On the other hand, we claim
$$
\|(-\Delta)^{-\frac{1}{2}} \det(\nabla_x F)\|_{L^2_x} \leq C \|\nabla_x F\|_{L^2_x}^2.
$$
This follows from Wente's inequality; see e.g. Theorem 0.2 of Chanillo-Li \cite{MR1190215}. Alternatively, since we are in 2 space dimensions, by compensation compactness (see Coifman-Lions-Meyer-Semmes \cite{MR1225511}), $\|\nabla_x F\|_{L^2_x}^2$ controls the Hardy $\mathcal{H}^1_x$ norm of $\det(\nabla_x F)$, which in turn controls the negative Sobolev $\dot{H}^{-1}_x$ norm of $\det(\nabla_x F)$, from which our claim follows. Arguing using the classical energy estimate as in the proof of Proposition~\ref{prop1}, the desired estimate follows.

\bibliography{div_curl_paper}

\end{document}